%

\documentclass[coll]{imsart}

\usepackage{amsthm,amsmath,amsfonts,natbib}
\usepackage{url}
\RequirePackage[colorlinks,citecolor=blue,urlcolor=blue]{hyperref}

\RequirePackage{hypernat}

\startlocaldefs

\newtheorem{corollary}{Corollary}
\newtheorem{lemma}{Lemma}
\newtheorem{proposition}{Proposition}

\theoremstyle{remark}
\newtheorem{example}{Example}

\newfont{\msbm}{msbm10 at 11pt}
\newcommand {\sZ} {\mathsf{Z}}
\newcommand {\cB} {{\cal B}}
\newcommand {\sX} {\mathsf{X}} 
\newcommand {\sY} {\mathsf{Y}}

\def\baro{\vskip  .2truecm\hfill \hrule height.5pt \vskip  .2truecm}
\def\barba{\vskip -.1truecm\hfill \hrule height.5pt \vskip .4truecm}
\endlocaldefs
\begin{document}

\begin{frontmatter}

\title{On the Geometric Ergodicity of Two-Variable Gibbs Samplers}
\runtitle{Geometric Ergodicity}

\author{\fnms{Aixin} \snm{Tan}\corref{}\ead[label=e1]{aixin-tan@uiowa.edu}}
\address{Department of Statistics and Actuarial Science 
University of Iowa 
Iowa City, IA 52242
\printead{e1}}
\affiliation{University of Iowa}

\author{\fnms{Galin L.} \snm{Jones}\ead[label=e2]{galin@umn.edu}}
\address{School of Statistics 
University of Minnesota 
Minneapolis, MN 55455 
\printead{e2}}

\affiliation{University of Minnesota}

\author{\fnms{James P.} \snm{Hobert}\ead[label=e3]{jhobert@stat.ufl.edu}}
\address{Department of Statistics 
University of Florida 
Gainesville, FL 32611
\printead{e3}}

\affiliation{University of Florida}

\thankstext{t1}{Jones is partially supported by the National
  Institutes of Health.  Hobert is partially supported by the National
  Science Foundation.}  
\runauthor{Tan et al.}

\begin{abstract}
  A Markov chain is geometrically ergodic if it converges to its
  invariant distribution at a geometric rate in total variation norm.
  We study geometric ergodicity of deterministic and random scan
  versions of the two-variable Gibbs sampler.  We give a sufficient
  condition which simultaneously guarantees both versions are
  geometrically ergodic.  We also develop a method for simultaneously
  establishing that both versions are subgeometrically ergodic.  These
  general results allow us to characterize the convergence rate of
  two-variable Gibbs samplers in a particular family of discrete
  bivariate distributions.
\end{abstract}

\begin{keyword}[class=AMS]
\kwd[Primary ]{60J10}
\kwd[; secondary ]{62F15}
\end{keyword}

\begin{keyword}
\kwd{Geometric ergodicity, Gibbs sampler, Markov chain, Monte Carlo}
\end{keyword}

\end{frontmatter}

\section{Introduction}
\label{sec:introduction}
Let $\varpi$ be a probability distribution having support $\sX \times
\sY \subseteq \mathbb{R}^{k} \times \mathbb{R}^{l}$, $k, l \ge 1$ and
$\varpi_{X|Y}$ and $\varpi_{Y|X}$ denote the associated conditional
distributions.  We assume it is possible to simulate directly from
$\varpi_{X|Y}$ and $\varpi_{Y|X}$. Then there are two Markov chains
having $\varpi$ as their invariant distribution, each of which could
be called a \textit{two-variable Gibbs sampler} (TGS). The most common
version of a TGS is the deterministic scan Gibbs sampler (DGS), which
is now described. Suppose the current state of the chain is $(X_{n},
Y_{n}) = (x, y)$, then the next state, $(X_{n+1}, Y_{n+1})$, is
obtained as follows.

\baro
Iteration $n+1$ of DGS:
\begin{enumerate}
\item Draw $X_{n+1} \sim \varpi_{X|Y} ( \cdot | y)$, and call the
  observed value $x'$.
\item Draw $Y_{n+1} \sim \varpi_{Y|X} (\cdot | x')$.
\end{enumerate}
\barba

An alternative TGS is the random scan Gibbs sampler (RGS).  Fix $p
\in  (0,1)$ and suppose the current state of the RGS chain is $(X_{n}, Y_{n}) =
(x, y)$.  Then the next state, $(X_{n+1}, Y_{n+1})$, is obtained as
follows.

\baro
Iteration $n+1$ of RGS:
\begin{enumerate}
\item Draw $B \sim \text{Bernoulli}(p)$.
\item If $B=1$, then draw $X_{n+1} \sim \varpi_{X|Y} ( \cdot | y)$ and
  set $Y_{n+1}=y$.
\item If $B=0$, then draw $Y_{n+1} \sim \varpi_{Y|X} (\cdot | x)$ and
  set $X_{n+1} = x$.
\end{enumerate}
\barba

Despite the simple structure of either TGS, these algorithms are
widely applicable in the posterior analysis of complex Bayesian
models.  A TGS also arises naturally when $\varpi$ is created via
data augmentation techniques \citep{hobe:2011,tann:wong:1987}.

Inference based on $\varpi$ often requires calculation of an
intractable expectation.  Let $g : \sX \times \sY \to \mathbb{R}$ and
let $E_{\varpi} g$ denote the expectation of $g$ with respect to
$\varpi$. If a TGS Markov chain is ergodic 
\citep[see][]{tier:1994} 
and $E_{\varpi} |g| < \infty$, then 
\[
\bar{g}_{n} : = \frac{1}{n} \sum_{i=0}^{n-1} g(X_{i}, Y_{i})
\stackrel{a.s.}{\longrightarrow} E_{\varpi} g \; \qquad \text{ as } n \to \infty .
\]
Thus estimation of $E_{\varpi} g$ is simple.  However, the estimator
$\bar{g}_{n}$ will be more valuable if we can attach an estimate of
the unknown Monte Carlo error $\bar{g}_{n} - E_{\varpi} g$.  An
approximation to the sampling distribution of the Monte Carlo error is
available when a Markov chain central limit theorem (CLT) holds
\[
\sqrt{n} (\bar{g}_{n} - E_{\varpi} g) \stackrel{d}{\to} \text{N}(0,
\sigma_{g}^{2}) \qquad \text{ as } n \to \infty 
\]
with $0 < \sigma_{g}^{2} < \infty$.  The variance $\sigma_{g}^{2}$
accounts for the serial dependence in a TGS Markov chain and
consistent estimation of it requires specialized techniques such as
batch means, spectral methods or regenerative simulation.  Let
$\hat{\sigma}_{n}^{2}$ be an estimator of $\sigma_{g}^{2}$.  If, with
probability 1, $\hat{\sigma}_{n}^{2} \to \sigma_{g}^{2}$ as $n \to
\infty$, then an asymptotically valid Monte Carlo standard error is
$\hat{\sigma}_{n} / \sqrt{n}$.  These tools allow the practitioner to
use the results of a TGS simulation with the same level of confidence
that one would have if the observations were a random sample from
$\varpi$.  For more on this approach the interested reader can consult
\citet{geye:1992}, \citet{geye:2011}, \citet{fleg:hara:jone:2008},
\citet{fleg:jone:2010}, \citet{fleg:jone:2011},
\citet{hobe:jone:pres:rose:2002}, \citet{jone:hara:caff:neat:2006},
and \citet{jone:hobe:2001}.

The CLT will obtain if $E_{\varpi} |g|^{2+\epsilon} < \infty$ for some
$\epsilon > 0$ and the Markov chain is rapidly mixing
\citep{chan:geye:1994}.  In particular, we require that the Markov
chain be geometrically ergodic; that is, converge to the target
$\varpi$ in total variation norm at a geometric rate. Under these same
conditions methods such as batch means and regenerative simulation
provide strongly consistent estimators of $\sigma^{2}_{g}$. Thus
establishing geometric ergodicity is a key step in ensuring the
reliability of a TGS as a method for estimating features of $\varpi$.

The convergence rate of DGS Markov chains has received substantial
attention.  In particular, sufficient conditions for geometric
ergodicity have been developed for several DGS chains for practically
relevant statistical models \citep[see
e.g.][]{hobe:geye:1998,john:jone:2010,jone:hobe:2004,marc:hobe:2004,robe:pols:1994,robe:rose:1999a,roma:hobe:2011,roma:2012,rose:1996,roy:hobe:2007,tan:hobe:2009}. The
convergence rates of RGS chains has received almost no attention
despite sometimes being useful.  \citet{liu:wong:kong:1995} did
investigate geometric convergence of RGS chains, but the required
regularity conditions are daunting and, to our knowledge, have not
been applied to practically relevant statistical models.  Recently
\citet{john:jone:neat:2011} gave conditions which simultaneously
establish geometric ergodicity of both the DGS chain and the
corresponding RGS chain.  These authors also conjectured that if the
RGS chain is geometrically ergodic, then so is the DGS chain.  That is
to say, the qualitative convergence properties of TGS chains coincide.
We are not able to resolve this conjecture in general, but in our main
application (see Section~\ref{sec:example}) this is indeed the case.

A TGS chain which converges subgeometrically (ie, slower than
geometric) would not be as useful as another chain which is
geometrically ergodic--although with additional moment conditions it
is still possible for a CLT to hold \citep{jone:2004}.  Thus it would
be useful to have criteria to check for subgeometric convergence.  We
are unaware of any previous work investigating subgeometric
convergence of TGS Markov chains.

In the rest of this paper, we extend the results of
\citet{john:jone:neat:2011} and provide a condition which can be used
to simultaneously establish geometric ergodicity of DGS and RGS Markov
chains.  We then turn our attention to development of a condition
which ensures that both the DGS and RGS chains converge
subgeometrically.  Finally, we apply our results to a class of
bivariate distributions where we are able to characterize the
convergence properties of the DGS and RGS chains.  But we begin with
some Markov chain background and a formal definition of the Markov
chains we study.

\section{Background and Notation}
\label{sec:background}

Let $\sZ$ be a topological space and $\cB(\sZ)$ denote its Borel
$\sigma$-algebra. Also, let $\Phi = \left\{Z_{0}, Z_{1}, Z_{2}, \ldots
\right\}$ be a Markov chain having Markov transition kernel $P$.  That
is, $P : \sZ \times \cB(\sZ) \to [0,1]$ such that for each $A \in
\cB(\sZ)$, $P(\cdot, A)$ is a nonnegative measurable function and for
each $z \in \sZ$, $P(z, \cdot)$ is a probability measure. As usual,
$P$ acts to the left on measures so that if $\nu$ is a measure on
$(\sZ, \cB(\sZ))$ and $A \in \cB(\sZ)$, then
\[
\nu P (A) = \int_{\sZ} \nu(dz) P(z, A) \; .
\]
For any $n \in \mathbb{Z}^+$, the $n$-step Markov transition kernel is given by 
$P^n(z,A) = \mathrm{Pr}(Z_{n+j} \in A | Z_{j} = z)$.

Let $w$ be an invariant probability measure for $P$, that is,
$w P = w$.  Denote total variation norm by $\|\cdot\|_{TV}$.
If $\Phi$ is ergodic, 
then for all $z \in \sZ$ we have
$||P^n(z, \cdot) - w(\cdot) ||_{TV} \to 0$ as $n \to \infty$.
Our goal is to study the rate of this convergence.  Suppose there
exist a real-valued function $M(z)$ on $\sZ$ and $0 < t < 1$ such that
for all $z$
\begin{equation}
\label{eq:geometric ergodicity}
||P^n(z, \cdot) - w(\cdot) ||_{TV} \leq M(z) t^{n} \; .
\end{equation}
Then $\Phi$ is \textit{geometrically ergodic}, otherwise it is
\textit{subgeometrically ergodic}.

\subsection{Two-variable Gibbs samplers}
\label{sec:three chains}

In this section we define the Markov kernels associated with the DGS
and RGS chains described in Section~\ref{sec:introduction}.  We also
introduce a third Markov chain which will prove crucial to our study
of the other Markov chains.

Recall that $\varpi$ is a probability distribution having support $\sX
\times \sY \subseteq \mathbb{R}^{k} \times \mathbb{R}^{l}$, $k, l \ge
1$. Let $\pi(x, y)$ be a density of $\varpi$ with respect to a measure
$\mu = \mu_{X} \times \mu_{Y}$.  Then the marginal densities are given
by
\[
\pi_{X} (x) = \int_{\sY} \pi(x, y) \mu_{Y}(dy)
\]
and similarly for $\pi_{Y}(y)$. The conditional densities are
$\pi_{X|Y} (x|y) = \pi(x,y) / \pi_{Y}(y)$ and $\pi_{Y|X}(y|x) =
\pi(x,y) / \pi_{X}(x)$.

Consider the DGS Markov chain $\Phi_{DGS} = \{ (X_{0}, Y_{0}), (X_{1}, Y_{1}),
\ldots \}$ and let 
\[
k_{DGS}(x', y' | x, y) = \pi_{X|Y}(x' | y) \pi_{Y|X}(y'|x') \; .
\]
Then the Markov kernel for $\Phi_{DGS}$ is defined by
\[
P_{DGS} ((x,y), A) = \int_{A} k_{DGS}(x', y' | x, y)
\mu(d(x',y'))\qquad A \in \cB(\sX) \times \cB(\sY) \;. 
\]
It is well known that the two marginal sequences comprising $\Phi_{DGS}$ are themselves Markov chains \citep{liu:wong:kong:1994}. We now consider the marginal sequence $\Phi_{X} = \{ X_{0}, X_{1}, 
\ldots \}$ and define
\[
k_{X}(x' | x) = \int_{\sY} \pi_{X|Y}(x' | y) \pi_{Y|X}(y|x)
\mu_{Y}(dy) \; .
\]
The Markov kernel for $\Phi_{X}$ is
\[
P_{X} (x, A) = \int_{A} k_{X}(x' | x) \mu_{X}(dx') \qquad A \in
\cB(\sX) \; .
\]
Note that $P_{DGS}$ has $\varpi$ as its invariant distribution while
$P_{X}$ has the marginal $\varpi_{X}$ as its invariant distribution.

Finally, consider the RGS Markov chain $\Phi_{RGS} = \{ (X_{0},
Y_{0}), (X_{1}, Y_{1}), \ldots \}$.  Let $p \in (0,1)$ and
$\delta$ denote Dirac's delta.  Define
\[
k_{RGS}(x', y' | x, y)= p \pi_{X|Y}(x'|y) \delta(y' - y) + (1-p)
\pi_{Y|X} (y'|x) \delta(x' - x) \; .
\]
Then the Markov kernel for $\Phi_{RGS}$ is
\begin{align*}
P_{RGS}((x,y), A) & = \int_{A} k_{RGS}(x' , y' | x, y) \mu(d(x', y')) 
\end{align*}
It is easy to show via direct computation that $\varpi$ is invariant
for $P_{RGS}$.

It is well known that $P_{X}$ and $P_{DGS}$ converge to their
respective invariant distributions at the same rate
\citep{diac:etal:2008,liu:wong:kong:1994,robe:1995,robe:rose:2001}.
Thus if one is geometrically ergodic, then so is the other. This
relationship has been routinely exploited in the study of TGS chains
for practically relevant statistical models
\citep[cf.][]{hobe:geye:1998,john:jone:2010,jone:hobe:2004,roy:hobe:2007,tan:hobe:2009}
since one of the two chains may be easier to analyze than the other.
Recently, \citet{john:jone:neat:2011} connected the geometric
ergodicity of $P_{X}$ to that of $P_{RGS}$.  Thus establishing
geometric ergodicity of TGS algorithms often comes down to analyzing
$P_{X}$. This is exactly the approach we take in
Sections~\ref{sec:geometric ergodicity} and~\ref{sec:example}.
 
\section{Conditions for Geometric Ergodicity}
\label{sec:geometric ergodicity}

In this section we develop general conditions which ensure that
$P_{X}$, $P_{DGS}$ and $P_{RGS}$ are geometrically ergodic.  First we
need a couple of concepts from Markov chain theory.  Recall the
notation from Section~\ref{sec:background}.
That is, $P$ is a Markov
kernel on $(\sZ, \cB(\sZ))$.  Then $P$ is \textit{Feller}
if for any open set $O \in \cB(\sZ)$, $P(\cdot, O)$ is a lower
semicontinuous function.  The Markov kernel $P$ acts to the right on
functions so that for measurable $f$
\[
Pf(z) = \int_{\sZ} f(z') P(z, dz')\; .
\]
A \textit{drift condition} holds if there exists a function
$U: \sZ \to \mathbb{R}^{+}$, and constants $0 < \lambda < 1$ and $L <
\infty$ satisfying
\begin{equation}
\label{eq:drift}
PU(z)  \le \lambda U(z) + L~~~~~\text{for all } z \in \sZ \,.  
\end{equation}
Recall that a function $U$ is said to be \textit{unbounded off compact
  sets} if the sublevel set $\{z \in \sZ : U(z) \leq d \}$ is compact
for every $d>0$.  If $P$ is Feller, $U$ is unbounded off compact sets
and satisfies \eqref{eq:drift}, then $\Phi$ is geometrically
ergodic. See \citet{meyn:twee:1993} and \citet{robe:rose:2004} for
details while \citet{jone:hobe:2001} give an introduction to the use
of drift conditions.

\subsection{Two-variable Gibbs samplers}

\citet{john:jone:neat:2011} gave a set of conditions which
simultaneously prove that $\Phi_{X}$, $\Phi_{DGS}$ and $\Phi_{RGS}$
are geometrically ergodic.  We build on their work and show how a
drift condition for $P_{X}$ naturally provides a drift condition for
$P_{RGS}$.  This allows us to develop an alternative set of conditions
which are sufficient for the geometric ergodicity of $P_{X}$,
$P_{DGS}$ and $P_{RGS}$.  The application of this method is
illustrated in Section~\ref{sec:example}.

The following result was essentially proved by
\citet{john:jone:neat:2011}, but it was not stated in their paper; see
\citet{john:2009} for related material.  Thus we provide a proof for
the sake of completeness.  First we set some notation.  Suppose $V :
\sX \to \mathbb{R}^{+}$ and let
\[
G(y) = \int_{\sX} V(x) \pi_{X|Y}(x|y) \mu_{X}(dx) \; .
\]
Also, for $c > 0$ define 
\begin{equation}
\label{eq:drift fn}
W(x,y) = V(x) + cG(y) \; .
\end{equation}

\begin{lemma} \label{lem:drift} 
Suppose there exist constants $0 < \lambda < 1$ and $L < \infty$ such
that for all $x \in \sX$ 
\[
P_{X} V(x) \le \lambda V(x) + L \; .
\]
If $0 < p < 1$ and $p(1-p)^{-1} < c < p[\lambda (1-p)]^{-1}$, then
there exists $\lambda < \gamma < 1$ such that
\[
P_{RGS} W(x,y) \le \gamma W(x,y) + (1-p)cL \; .
\]
\end{lemma}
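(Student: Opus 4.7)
The plan is to compute $P_{RGS} W(x,y)$ directly by splitting along the two branches of the RGS kernel (X-update with probability $p$, Y-update with probability $1-p$), apply the given drift condition on $P_X$ to the pieces, and then read off constraints on $c$ that make the coefficients strictly less than $1$.

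First I would write
\[
P_{RGS} W(x,y) \;=\; p \int_{\sX} W(x',y)\,\pi_{X|Y}(x'|y)\,\mu_X(dx') \;+\; (1-p) \int_{\sY} W(x,y')\,\pi_{Y|X}(y'|x)\,\mu_Y(dy').
\]
On the X-update branch the $V(x)$-part of $W$ integrates to $G(y)$ by the very definition of $G$, while the $cG(y)$-part is constant in $x'$, giving $(1+c)G(y)$. On the Y-update branch the $V(x)$-part is constant in $y'$, and the $cG(y)$-part integrates to
\[
c \int_{\sY} G(y')\,\pi_{Y|X}(y'|x)\,\mu_Y(dy') \;=\; c\int_{\sX}\!\int_{\sY} V(x')\,\pi_{X|Y}(x'|y')\,\pi_{Y|X}(y'|x)\,\mu_Y(dy')\,\mu_X(dx') \;=\; c\,P_X V(x),
\]
by Fubini and the definition of $k_X$. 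Applying the hypothesized drift $P_X V(x) \le \lambda V(x) + L$ then yields
\[
P_{RGS} W(x,y) \;\le\; (1-p)(1+c\lambda)\,V(x) \;+\; p(1+c)\,G(y) \;+\; (1-p)cL.
\]

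The next step is to match this against $\gamma V(x) + \gamma c G(y) + (1-p)cL$. This gives the two requirements $(1-p)(1+c\lambda) \le \gamma$ and $p(1+c)/c \le \gamma$. Both can be made strictly less than $1$ precisely when $c > p/(1-p)$ (the $G(y)$-coefficient) and $c < p/[\lambda(1-p)]$ (the $V(x)$-coefficient), which is exactly the range of $c$ assumed in the hypothesis. Define
\[
\gamma \;=\; \max\!\Bigl\{(1-p)(1+c\lambda),\;\tfrac{p(1+c)}{c}\Bigr\};
\]
by construction $\gamma < 1$, and using $c < p/[\lambda(1-p)]$ one checks $p(1+c)/c = p + p/c > p + \lambda(1-p) > \lambda$, so $\gamma > \lambda$ as claimed.

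There is really no serious obstacle here; the argument is essentially a bookkeeping exercise. The one subtlety worth being careful about is the double application of the conditional densities in the Y-branch, which is what converts the $cG(y')$ term into $c\,P_X V(x)$ and thereby lets us invoke the drift condition on $P_X$. Everything else is comparing coefficients and verifying the algebraic inequalities implied by the range of $c$.
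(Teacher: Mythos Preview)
Your proposal is correct and follows essentially the same route as the paper: split $P_{RGS}W$ along the two branches, recognize that the $Y$-branch produces $c\,P_X V(x)$ so the drift hypothesis applies, collect coefficients to obtain $(1-p)(1+c\lambda)V(x)+p(1+c)G(y)+(1-p)cL$, and then choose $\gamma$ as the maximum of the two coefficients. The only cosmetic difference is that you verify $\gamma>\lambda$ via the $p(1+c)/c$ term while the paper uses the $(1-p)(1+c\lambda)$ term; both arguments are valid.
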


\begin{proof}
Notice that
\begin{align*}
\int_{\sY} G(y) \pi_{Y|X} (y|x) \mu_{Y}(dy) & = \int_{\sY} \int_{\sX}
V(x') \pi_{X|Y}(x'|y) \pi_{Y|X} (y|x) \mu_{X}(dx') \mu_{Y}(dy)\\
& = \int_{\sX} V(x') \int_{\sY} \pi_{X|Y}(x'|y) \pi_{Y|X}(y|x)
\mu_{Y}(dy) \mu_{X}(dx')\\
& = \int_{\sX} V(x') k_{X}(x' | x) \mu_{X}(dx')\\
& \le \lambda V(x) + L \; .
\end{align*}
Since 
\begin{equation} \label{eq:c values}
\frac{p}{1-p} < c < \frac{p}{\lambda(1-p)} 
\end{equation}
there exists $\gamma$ such that
\begin{equation} \label{eq:gamma values}
(1-p)( c \lambda + 1)\vee \frac{p(1+c)}{c} \le \gamma < 1 \, .
\end{equation}
Then
\begin{align*}
  P_{RGS} W(x,y) & = \int_{\sX} \int_{\sY} W(x' , y') k_{RGS} (x' , y'
  | x, y) \mu_{X} (dx') \mu_{Y}(dy')\\
  & = p \int_{\sX} \int_{\sY} W(x', y') \pi_{X|Y}(x' | y) \delta(y'-y)
  \mu_{X} (dx') \mu_{Y}(dy') \\
  & \qquad + (1-p) \int_{\sX} \int_{\sY} W(x', y') \pi_{Y|X}(y'|x)
  \delta(x' - x) \mu_{X} (dx') \mu_{Y}(dy') \\
  & = p \int_{\sX}   W(x', y) \pi_{X|Y}(x' | y)  \mu_{X}(dx') + (1-p)
  \int_{\sY} W(x, y') \pi_{Y|X} (y'|x)  \mu_{Y} (dy') \\
 & = p  \int_{\sX} \left[ V(x') + c G(y) \right] \pi_{X|Y}(x' | y)
 \mu_{X}(dx') \\
& \qquad + (1-p) \int_{\sY} \left[V(x) + c G(y') \right]
\pi_{Y|X}(y'|x) \mu_{Y}(dy') \\ 
& = pcG(y) + (1-p) V(x) + p G(y) + (1-p) c \int_{\sY} G(y') \pi_{Y|X}
(y' | x) \mu_{Y} (dy') \\
& = p(1+c) G(y) + (1-p) V(x) + (1-p) c \int_{\sY} G(y') \pi_{Y|X} (y'
| x) \mu_{Y} (dy') \\
& \le (1-p) c \lambda V(x) + (1-p) c L + p(1+c) G(y) + (1-p) V(x) \\
& = (1-p) ( c \lambda + 1) V(x) + p(1+c) G(y) + (1-p)cL \\
& \le \gamma W(x,y) + (1-p) cL \; .
\end{align*}
All that remains is to show that $\gamma > \lambda$. Now
\begin{align*}
\gamma &\ge (1-p) ( c \lambda + 1) \qquad \qquad \text{ by } \eqref{eq:gamma
values}\\
 & > (1-p) \left( \frac{p}{1-p} \lambda + 1 \right) ~\quad \text{ by }
 \eqref{eq:c values} \\
 & = p \lambda + (1-p) \\
 & > \lambda \qquad \qquad \qquad \qquad \qquad\text{ since } \lambda,
 \, p \in (0,1) \; . 
\end{align*}
\end{proof}

The following is an easy consequence of Lemma~\ref{lem:drift} and the
material stated at the beginning of this section.

\begin{proposition} \label{prop:feller drift} Suppose $P_{X}$ and
  $P_{RGS}$ are Feller. If there exists a function $V: \sX \to
  \mathbb{R}^{+}$ such that both $V$ and the corresponding $W$ (as
  defined at \eqref{eq:drift fn}) are unbounded off compact sets, and
  there exist constants $0 < \lambda < 1$ and $L < \infty$ such that
  for all $x \in \sX$
\[
P_{X} V(x) \le \lambda V(x) + L,
\]
then $\Phi_{X}$, $\Phi_{DGS}$ and $\Phi_{RGS}$ are geometrically ergodic.
\end{proposition}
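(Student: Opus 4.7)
The proposition is essentially a direct consequence of Lemma~\ref{lem:drift} combined with the Feller-plus-drift criterion for geometric ergodicity recalled at the beginning of Section~\ref{sec:geometric ergodicity} and the fact, noted at the end of Section~\ref{sec:three chains}, that $P_X$ and $P_{DGS}$ converge at the same rate. So my plan is to handle the three chains separately, in the order $\Phi_X$, $\Phi_{DGS}$, $\Phi_{RGS}$, each as a one-line invocation of already-established machinery.

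First, for $\Phi_X$: the hypotheses directly give that $P_X$ is Feller, that $V$ is unbounded off compact sets, and that $V$ satisfies the drift condition $P_X V(x) \le \lambda V(x) + L$ on $\sX$. Applying the Feller-plus-drift criterion (Meyn--Tweedie / Roberts--Rosenthal, as cited just before the subsection) to the kernel $P_X$ with the test function $V$ immediately yields geometric ergodicity of $\Phi_X$.

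Second, for $\Phi_{DGS}$: no additional work is needed. The paper already records the well-known equivalence of convergence rates between $P_X$ and $P_{DGS}$, so geometric ergodicity of $\Phi_X$ transfers to $\Phi_{DGS}$.

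Third, for $\Phi_{RGS}$: here I use Lemma~\ref{lem:drift}. Fix any $p \in (0,1)$ (the statement of the proposition holds for each such $p$) and choose $c$ in the interval $(p/(1-p),\, p/[\lambda(1-p)])$ used in that lemma; this interval is nonempty because $\lambda < 1$. Then Lemma~\ref{lem:drift} delivers constants $\gamma \in (\lambda,1)$ and a drift inequality
\[
P_{RGS} W(x,y) \le \gamma W(x,y) + (1-p) c L
\]
on $\sX \times \sY$ with $W$ as defined in \eqref{eq:drift fn}. The hypotheses of the proposition include that $P_{RGS}$ is Feller and that this $W$ is unbounded off compact sets, so the Feller-plus-drift criterion applies to $P_{RGS}$ with test function $W$, giving geometric ergodicity of $\Phi_{RGS}$.

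There is no genuine obstacle, since all of the substantive work is done by Lemma~\ref{lem:drift} and by the cited theorem on drift plus Feller. The one small point to be careful about is simply that the conditions on $c$ in Lemma~\ref{lem:drift} can be satisfied (which follows from $0 < \lambda < 1$), and that the $W$ appearing in the $P_{RGS}$ drift is the same $W$ assumed unbounded off compact sets in the proposition; both are automatic from the setup.
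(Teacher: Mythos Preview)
Your proposal is correct and matches the paper's own approach exactly: the paper does not give a detailed proof but simply states that the proposition ``is an easy consequence of Lemma~\ref{lem:drift} and the material stated at the beginning of this section,'' which is precisely the Feller-plus-drift criterion together with the $\Phi_X$/$\Phi_{DGS}$ equivalence that you invoke. Your handling of the three chains, including the observation that the interval for $c$ is nonempty because $\lambda<1$, fills in the routine details the paper omits.
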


\section{Conditions for Subgeometric Convergence}
\label{sec:subgeometric convergence}
 
Our goal in this section is to develop a condition which ensures that
$\Phi_{X}$, $\Phi_{DGS}$ and $\Phi_{RGS}$ converge subgeometrically,
but first we need a few concepts from general Markov chain theory.
Recall the notation of Section~\ref{sec:background}. In particular,
$P$ is a Markov kernel on $(\sZ, \cB(\sZ))$ having invariant
distribution $w$.  A Markov kernel defines an operator on the space of
measurable functions that are square integrable with respect to the
invariant distribution, denoted $L^{2}(w)$. Also, let
\[
L_{0,1}^{2}(w) = \left\{ f \in L^{2}(w) \, : \, E_{w} f = 0,
  \text{ and } E_{w} f^{2} = 1 \right\} \; .
\]
For $f, g \in L^{2}(w)$, define the inner product as
\[
\langle f, g \rangle = \int_{\sZ} f(z) g(z) w(dz) 
\]
and $\|f\|^{2} = \langle f, f \rangle$.  The norm of the operator $P$
is 
\[
\| P \| = \sup_{f \in L_{0,1}^{2}(w)} \|Pf \| \; .
\]
If $P$ is symmetric with respect to $w$, that is, if
\begin{equation}
\label{eq:symmetry}
P(z, dz')w(dz) = P(z', dz) w(dz'), 
\end{equation}
then $P$ is self-adjoint so that $\langle P h_{1}, h_{2} \rangle = \langle
h_{1}, P h_{2} \rangle$.  If $P$ is $w$-symmetric, then $\Phi$ is
geometrically ergodic if and only if $\|P\| < 1$ \citep{robe:rose:1997c}. 
Moreover, if $
Z\sim w$ and $Z'|Z=z\sim P(z, \cdot)$, then 
\begin{equation}
\label{eq:operator norm}
\|P \|  = \sup_{f \in
L_{0,1}^{2}(w)} |\langle P f, f \rangle | = \sup_{f \in
L_{0,1}^{2}(w)} |E \left[ f(Z') f(Z) \right] | \; .
\end{equation}
The first equality is a property of self-adjoint operators while the
second equality follows directly from the definition of inner product.

\subsection{Two-variable Gibbs samplers}

It is easy to see that $P_{X}$ is $\varpi_{X}$-symmetric and $P_{RGS}$
is $\varpi$-symmetric, but $P_{DGS}$ is not $\varpi$-symmetric.
Because $P_{X}$ and $P_{RGS}$ are symmetric, the operator theory
described above applies.  In particular, if $X \sim \varpi_{X}$ and $X'|X=x
\sim P_{X}(x, \cdot)$, then
\[
\| P_{X} \| = \sup_{f \in
L_{0,1}^{2}(\varpi_{X})} | E [f(X') f(X)] |
\]
while if $(X, Y) \sim \varpi$ and $(X', Y')| (X, Y)=(x, y) \sim P_{RGS}((x,y), \cdot)$, then
\[
\| P_{RGS} \|  = \sup_{f \in
L_{0,1}^{2}(\varpi) } | E [f(X',Y') f(X, Y)] |\; .
\]
Note that despite our use of $\| \cdot \|$ for both operator norms, these
are different since they are based on different $L^{2}$ spaces. 

If we can show that $\|P_{X}\|=\|P_{RGS}\|=1$, then we will be able to
conclude that $\Phi_{X}$, $\Phi_{DGS}$, and $\Phi_{RGS}$ are
subgeometrically ergodic. First, we need convenient characterizations
of the operator norms.

\begin{lemma} \label{lem:operator norms}
If $(X, Y) \sim \varpi$, then
\begin{align*}
\|P_{X} \| & = 1 - \inf_{f \in L_{0,1}^{2}(\varpi_{X})} E(\text{Var}(f(X)|Y))
\end{align*}
and 
\begin{align*}
\|P_{RGS} \| & = 1 - \inf_{f \in L_{0,1}^{2}(\varpi)} \left\{p E (
  \text{Var} ( f(X, Y) | Y)) + (1-p) E ( \text{Var} (f(X, Y) | X)) \right\} \; .
\end{align*}
\end{lemma}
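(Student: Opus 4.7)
The plan is to start from the characterization of the operator norm given in equation \eqref{eq:operator norm}, namely
\[
\|P\| \;=\; \sup_{f \in L^{2}_{0,1}(w)} |E[f(Z')f(Z)]|,
\]
and to compute $E[f(Z')f(Z)]$ explicitly for each of the two kernels, using the stagewise structure of the updates and the tower/conditional-variance identity.

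For $P_{X}$, I would argue as follows. When $X\sim \varpi_{X}$ and $X'\mid X \sim P_{X}(X,\cdot)$, the transition can be realized by first drawing $Y\mid X \sim \pi_{Y\mid X}$ and then $X'\mid Y \sim \pi_{X\mid Y}$. Consequently $(X,Y)\sim \varpi$, and $X'$ is conditionally independent of $X$ given $Y$, with $X'\mid Y \sim \pi_{X\mid Y}(\cdot\mid Y)$. Therefore
\[
E[f(X')f(X)\mid Y] \;=\; E[f(X')\mid Y]\,E[f(X)\mid Y] \;=\; \bigl(E[f(X)\mid Y]\bigr)^{2}.
\]
Taking expectations and using the conditional-variance decomposition together with $E[f(X)^{2}]=1$ gives $E[f(X')f(X)] = 1 - E(\text{Var}(f(X)\mid Y))$. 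Since this quantity is nonnegative, the absolute value inside the supremum is superfluous, and taking the sup over $f \in L^{2}_{0,1}(\varpi_{X})$ yields the claimed formula after swapping $\sup$ and $\inf$ through the sign.

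For $P_{RGS}$, the same strategy applies but the one-step kernel is a $p,(1-p)$ mixture. If $(X,Y)\sim\varpi$ and $(X',Y')\mid (X,Y)\sim P_{RGS}((X,Y),\cdot)$, then with probability $p$ we have $Y'=Y$ and $X'\mid Y\sim \pi_{X\mid Y}(\cdot\mid Y)$ (independent of $X$ given $Y$), and with probability $1-p$ we have $X'=X$ and $Y'\mid X\sim \pi_{Y\mid X}(\cdot\mid X)$ (independent of $Y$ given $X$). Splitting $E[f(X',Y')f(X,Y)]$ according to these two cases and applying the same conditional-independence calculation on each piece gives
\[
E[f(X',Y')f(X,Y)] \;=\; 1 - p\,E(\text{Var}(f(X,Y)\mid Y)) - (1-p)\,E(\text{Var}(f(X,Y)\mid X)).
\]
Again this is nonnegative, so the $|\cdot|$ in \eqref{eq:operator norm} can be dropped, and passing to the sup gives the second identity.

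There is no real obstacle here beyond keeping the conditioning bookkeeping straight. The one point worth flagging carefully is the justification that $|E[f(Z')f(Z)]|$ can be replaced by $E[f(Z')f(Z)]$ before taking the supremum; this follows at once from the observation that the explicit expressions obtained, namely $E[(E[f(X)\mid Y])^{2}]$ and $p\,E[(E[f(X,Y)\mid Y])^{2}]+(1-p)\,E[(E[f(X,Y)\mid X])^{2}]$, are both manifestly nonnegative.
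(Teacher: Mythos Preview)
Your proof is correct and follows essentially the same strategy as the paper: compute $E[f(Z')f(Z)]$ explicitly for each kernel and then apply the conditional-variance decomposition. The differences are cosmetic. For $P_{X}$, the paper cites \citet[Lemma~3.2]{liu:wong:kong:1994} to obtain $\sup_{f}|E[f(X')f(X)]|=\sup_{f}\text{Var}(E(f(X)\mid Y))$, whereas you give the self-contained conditional-independence argument that underlies that lemma. For $P_{RGS}$, the paper writes out the integral against $k_{RGS}$ line by line to reach $p\,E[(E[h(X,Y)\mid Y])^{2}]+(1-p)\,E[(E[h(X,Y)\mid X])^{2}]$, while you reach the same expression by the probabilistic case split on which coordinate is updated; these are the same computation in different notation. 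Your explicit remark that the absolute value in \eqref{eq:operator norm} can be dropped because the resulting expressions are nonnegative is a small clarification the paper leaves implicit.
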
 

\begin{proof}
Suppose $X \sim \varpi_{X}$, $X'|X=x \sim P_{X}(x, \cdot)$   and $(X, Y)
\sim \varpi$.  Then 
\begin{align*}
\|P_{X} \| & = \sup_{f \in L_{0,1}^{2}(\varpi_{X})} | E [f(X') f(X)] | \\
& = \sup_{f \in L_{0,1}^{2}(\varpi_{X})}\text{Var}(E (f(X) | Y)) \\
& = 1 - \inf_{f \in L_{0,1}^{2}(\varpi_{X})} E(\text{Var}(f(X)|Y)) \; .
\end{align*}
In the above, the second equality follows from \citet[Lemma
3.2]{liu:wong:kong:1994} and the last equality holds since for $f \in L_{0,1}^{2}(\varpi_{X})$
\[
1 = E(\text{Var}(f(X) | Y)) + \text{Var}(E(f(X)|Y))\,.
\]

Now consider $\|P_{RGS}\|$.  Suppose $(X,Y) \sim \varpi$ and $(X',Y')|(X,Y)=(x,y)
\sim P_{RGS}((x,y),\cdot)$.  Then

\begin{align*}
& E \left[ h(X', Y') h(X, Y) \right] \\ & = \int h(x',y')
h(x,y) k_{RGS}(x',y' | x, y) \pi(x,y) \mu_{X}(dx') \mu_{Y}(dy') \mu_{X}(dx)
\mu_{Y}(dy)\\
& = \int  h(x',y') h(x,y)\pi(x,y) [ p \pi_{X|Y}(x' | y) \delta(y' - y) \\
& \qquad + (1-p) \pi_{Y|X}(y' | x) \delta(x' - x)] \mu_{X}(dx') \mu_{Y}(dy') \mu_{X}(dx)
\mu_{Y}(dy)\\
& = \int p h(x',y) h(x,y) \pi_{X|Y}(x'|y) \pi(x,y) \mu_{X}(dx') \mu_{X}(dx)
\mu_{Y}(dy)\\
& \qquad  + \int (1-p)  h(x,y') h(x,y)\pi_{Y|X}(y' | x)\pi(x,y) \mu_{Y}(dy') \mu_{X}(dx)
\mu_{Y}(dy)\\
& = \int p h(x,y)  E[ h(X', Y) |Y=y]  \pi(x,y)  \mu_{X}(dx) \mu_{Y}(dy)\\
& \qquad  + \int (1-p)h(x,y)  E[ h(X, Y') | X=x] \pi(x,y) \mu_{X}(dx) \mu_{Y}(dy)\\
\end{align*}
\begin{align*}
& = \int p h(x,y)  E[ h(X', Y) |Y=y]  \pi_{X|Y}(x|y) \pi_{Y}(y) \mu_{X}(dx) \mu_{Y}(dy)\\
& \qquad  + \int (1-p)h(x,y)  E[ h(X, Y') | X=x] \pi_{Y|X}(y|x)
\pi_{X}(x) \mu_{X}(dx) \mu_{Y}(dy)\\ 
& = \int p E[ h(X, Y) | Y=y]   E[ h(X',Y) |Y=y]  \pi_{Y}(y) \mu_{Y}(dy) \\
& \qquad  + \int (1-p) E[ h(X, Y) | X=x]  E[ h(X, Y') | X=x] \pi_{X}(x)
\mu_{X}(dx) \\
& = \int p  (E[ h(X,Y) |Y=y])^{2}  \pi_{Y}(y) \mu_{Y}(dy)\\
& \qquad  + \int (1-p)  (E[ h(X, Y) | X=x])^{2} \pi_{X}(x) \mu_{X}(dx)\\
& = p E \left[ \left( E \left[ h(X, Y) | Y \right]\right)^{2} \right]
+ (1-p) E \left[ \left( E \left[ h(X, Y) | X \right]\right)^{2}
\right]\,. 
\end{align*}
Now since $h \in L_{0,1}^{2}(\varpi)$\,,
\[
\text{Var} ( E[ h(X, Y) | Y] ) = E [\left( E \left[ h(X, Y) | Y \right]\right)^{2}]
\]
and 
\[
 \text{Var} ( E [ h(X, Y) | X] ) =  E [\left( E \left[ h(X, Y) | X \right]\right)^{2}] \, .
\]
Moreover,
\[
1 =  \text{Var}_{\varpi} [ h(X, Y) ] =  \text{Var} ( E[ h(X, Y) | Y] )
+ E ( \text{Var} [ h (X, Y) | Y]) 
\]
and
\[
1 =  \text{Var}_{\varpi} [ h(X, Y) ] =  \text{Var} ( E[ h(X, Y) | X] )
+ E ( \text{Var} [ h (X, Y) | X]) \; .
\]
The result follows easily. 
\end{proof}

\begin{proposition} \label{prop:notGE}
Suppose there exists a sequence $\{ h_{i} \in L_{0,1}^{2}
(\varpi_{X})\}$ such that if $(X, Y) \sim \varpi$, then
\begin{equation}
\label{eq:liminf}
\liminf_{i \to \infty} E[ Var(h_{i}(X) | Y)] = 0 \; .
\end{equation}
Then $\|P_{X} \| = \|P_{RGS}\| = 1$.  Hence $\Phi_{X}$, $\Phi_{RGS}$
and $\Phi_{DGS}$ are subgeometrically ergodic.
\end{proposition}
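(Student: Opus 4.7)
The plan is to leverage Lemma~\ref{lem:operator norms}, which rewrites the two operator norms as one minus an infimum of expected conditional variances. The given sequence $\{h_i\}$ is already a near-optimizer for the infimum appearing in the formula for $\|P_X\|$, so the identity $\|P_X\| = 1 - \inf_{f \in L_{0,1}^{2}(\varpi_X)} E[\text{Var}(f(X)\mid Y)]$ immediately forces $\|P_X\| = 1$.

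To handle $\|P_{RGS}\|$, I would lift the $h_i$ to functions on $\sX \times \sY$ by setting $f_i(x,y) := h_i(x)$. Because $f_i$ depends on $x$ alone, a short check against the definitions shows $f_i \in L_{0,1}^{2}(\varpi)$ (the $\varpi$-mean and second moment of $f_i$ reduce to the corresponding $\varpi_X$-quantities of $h_i$), with $\text{Var}(f_i(X,Y) \mid X) \equiv 0$ and $\text{Var}(f_i(X,Y) \mid Y) = \text{Var}(h_i(X) \mid Y)$. Substituting into the second line of Lemma~\ref{lem:operator norms} yields
\[
p\,E[\text{Var}(f_i(X,Y) \mid Y)] + (1-p)\,E[\text{Var}(f_i(X,Y) \mid X)] = p\,E[\text{Var}(h_i(X) \mid Y)],
\]
whose liminf in $i$ vanishes by hypothesis. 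Hence the infimum defining $1 - \|P_{RGS}\|$ is $\le 0$, while the reverse inequality is automatic since conditional variances are nonnegative (equivalently, $\|P_{RGS}\| \le 1$ as a Markov operator). Therefore $\|P_{RGS}\| = 1$.

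Finally, because $P_X$ is $\varpi_X$-symmetric and $P_{RGS}$ is $\varpi$-symmetric, the Roberts--Rosenthal equivalence recalled in Section~\ref{sec:subgeometric convergence} (geometric ergodicity iff operator norm strictly less than one) rules out geometric ergodicity for $\Phi_X$ and $\Phi_{RGS}$. For $\Phi_{DGS}$, I would invoke the fact noted in Section~\ref{sec:three chains} that $P_X$ and $P_{DGS}$ converge to their invariant distributions at the same rate, so subgeometric convergence passes from $\Phi_X$ to $\Phi_{DGS}$. The only step requiring any real idea is the lifting $f_i(x,y) := h_i(x)$ used to transfer the near-optimizing sequence from $L_{0,1}^{2}(\varpi_X)$ into $L_{0,1}^{2}(\varpi)$; everything else is bookkeeping within the framework already established by Lemma~\ref{lem:operator norms} and the symmetry/operator-norm correspondence.
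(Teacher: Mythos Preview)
Your proposal is correct and follows essentially the same route as the paper: apply Lemma~\ref{lem:operator norms} directly for $\|P_X\|$, lift the sequence via $f_i(x,y):=h_i(x)$ to handle $\|P_{RGS}\|$, and then invoke the Roberts--Rosenthal characterization together with the $\Phi_X$/$\Phi_{DGS}$ rate equivalence. The only difference is that you spell out explicitly why the infimum is nonnegative, which the paper leaves implicit.
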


\begin{proof}
The claim that $\|P_{X} \|= 1$ follows easily from the first part of
Lemma~\ref{lem:operator norms}.  Now consider $\|P_{RGS} \|$.  Note that
  if $f'(x,y) := f(x) \in L_{0,1}^{2}(\varpi_{X})$, then $f' \in L_{0,1}^{2}(\varpi)$.
  From the second part of Lemma~\ref{lem:operator norms} we have
\[
\|P_{RGS} \|= 1 - \inf_{f \in L_{0,1}^{2}(\varpi)} \left\{p E (
  \text{Var} [ f(X, Y) | Y])+ (1-p) E ( \text{Var} [ f(X, Y) | X])
\right\} \; .
\]
The claim now follows easily since if $f(x,y) = h_{i}(x)$, then
\[
E ( \text{Var} [ f(X, Y) | X]) = E ( \text{Var} [ h_{i}(X) | X]) = 0
\]
and 
\[
E ( \text{Var} [ f(X, Y) | Y]) = E ( \text{Var} [ h_{i}(X) | Y] ) \; .
\]
Thus we conclude that $\Phi_{X}$ and $\Phi_{RGS}$ are
subgeometrically ergodic.  Since $\Phi_{X}$ and $\Phi_{DGS}$ are either
both geometrically ergodic or both subgeometric, it follows that
$\Phi_{DGS}$ also converges subgeometrically.
\end{proof}

\section{A Discrete Example}
\label{sec:example}

We introduce a family of simple discrete distributions which admit
usage of the TGS algorithms.  We then apply our general results which
will allow us to very nearly characterize the members of the family
which admit geometrically ergodic TGS Markov chains.

Let $\{ a_{i} \}_{i=1}^{\infty}$ and $\{b_{i}\}_{i=1}^{\infty}$ be
strictly positive sequences satisfying
\[
\sum_{i=1}^{\infty} a_{i} + \sum_{i=1}^{\infty} b_{i} = 1 \; .
\]
Also, let $b_{0} = 0$.  Let the family consist of the discrete
bivariate distributions having density $\pi$ with respect to counting
measure on $\mathbb{N} \times \mathbb{N}$ given by
\[
\pi(x,y) = 
\begin{cases}
a_{x} & x=y, \, y=1,2,3,\ldots \; ;\\
b_{y} & x =y+1, \, y=1,2,3,\ldots\; ; \\
0    & \text{otherwise} \, .
\end{cases}
\]
Hence the marginals are given by
\[
\pi_{X}(x) = \sum_{y=1}^{\infty} \pi(x,y) = \sum_{y=1}^{\infty} a_{x} I(x=y) + b_{y} I(y=x-1) = a_{x} + b_{x-1}
\]
and
\[
\pi_{Y}(y) = \sum_{x=1}^{\infty} \pi(x,y) = \sum_{x=1}^{\infty} a_{x}I(x=y) + b_{y} I(y=x+1) = a_{y} + b_{y} \; .
\]
The full conditionals are easily seen to be
\[
\pi_{X|Y}(x|y) = \frac{a_{y}}{a_{y} + b_{y}} I(x=y) + \frac{b_{y}}{a_{y} + b_{y}} I(x=y+1) \qquad y=1,2,3,\ldots
\]
and
\[
\pi_{Y|X}(y|x) = \frac{a_{x}}{a_{x} + b_{x-1}} I(x=y) + \frac{b_{x-1}}{a_{x} + b_{x-1}} I(y=x-1) \qquad x=1,2,3,\ldots \; .
\]
Define
\[
p_{x} = \frac{a_{x} b_{x}}{(a_{x} + b_{x-1})(a_{x} + b_{x})} \qquad
\text{ and } \qquad q_{x} =   \frac{a_{x-1} b_{x-1}}{(a_{x} +
  b_{x-1})(a_{x-1} + b_{x-1})} \; . 
\]
Then for the DGS 
\[
k_{DGS}(x' , y' | x, y) = \pi_{X|Y}(x' | y) \pi_{Y|X}(y' | x')
\]
and hence for the marginal chain $\Phi_{X}$ 
\[
k_{X}(x' | x) = \sum_{y=1}^{\infty} \pi_{X|Y} (x' | y) \pi_{Y|X}(y | x) = 
\begin{cases}
1 - p_{1} & x' = x = 1 ; \\
p_{x}     & x' = x+ 1, \, x \ge 1 ; \\
q_{x}     & x' = x - 1, \, x \ge 2 ; \\
1- p_{x} - q_{x} & x' = x, \, x \ge 2; \; \text{ and}\\
0        & \text{ otherwise} \; . 
\end{cases}
\]
It is easy to see that the kernel $P_{X}$ is Feller.  If $p \in (0,
1)$, then for the random scan Gibbs sampler (RGS) we have
\[
k_{RGS}(x', y' | x, y) = p \pi_{X|Y}(x' | y) \delta(y' - y) + (1-p)
\pi_{Y|X}(y' | x) \delta(x' - x) \; . 
\]
Since for any open set $O$
\[
P_{RGS} ((x,y), O) = p \sum_{x'=1}^{\infty} \pi_{X|Y}(x' |y) I((x',y) \in
O) + (1-p) \sum_{y'=1}^{\infty} \pi_{Y|X}(y'|x) I((x,y') \in O) 
\]
it is easy to see that $P_{RGS}(\cdot, O)$ is lower
semicontinuous and hence $\Phi_{RGS}$ is Feller.

We are now in position to establish sufficient conditions for the
geometric ergodicity of $\Phi_{X}$, $\Phi_{DGS}$ and $\Phi_{RGS}$.

\begin{lemma}
\label{lem:sufficient for GE}
If
\begin{equation}
\label{eq:sufficient for GE}
\limsup_{x \to \infty} \frac{p_{x}}{q_{x}} < 1 \qquad \text{ and }
\qquad \liminf_{x \to \infty} q_{x} > 0, 
\end{equation}
then $\Phi_{X}$, $\Phi_{DGS}$ and $\Phi_{RGS}$ are geometrically ergodic.
\end{lemma}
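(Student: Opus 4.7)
The natural route is to apply Proposition~\ref{prop:feller drift}. The Feller property of $P_X$ and $P_{RGS}$ has already been verified in the discussion preceding the lemma, so all that remains is to exhibit a function $V: \mathbb{N} \to \mathbb{R}^{+}$ satisfying a geometric drift for $P_X$ and such that both $V$ and the associated $W(x,y) = V(x) + cG(y)$ are unbounded off compact sets. Because $\sX = \mathbb{N}$ is discrete, the latter condition simply means that each function diverges to $\infty$ at infinity, so sublevel sets are finite.

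Motivated by the birth-death structure of $\Phi_X$, I would try the exponential drift $V(x) = \beta^{x}$ for some $\beta > 1$ to be chosen. Using the explicit form of $k_X$, for $x \geq 2$ one gets
\begin{equation*}
P_X V(x) = q_x \beta^{x-1} + (1 - p_x - q_x)\beta^x + p_x \beta^{x+1} = \beta^{x}\left[1 + (\beta - 1)\left(p_x - \tfrac{q_x}{\beta}\right)\right],
\end{equation*}
and for $x=1$ the quantity $P_X V(1)$ is simply a constant. Set $\rho := \limsup_{x\to\infty} p_x/q_x < 1$ and $\alpha := \liminf_{x\to\infty} q_x > 0$, and choose $\beta \in (1, 1/\rho)$ (any $\beta > 1$ if $\rho = 0$). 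Then there exists $N$ such that for all $x \geq N$, $p_x/q_x \leq \rho' < 1/\beta$ and $q_x \geq \alpha/2$, which yields
\begin{equation*}
p_x - \tfrac{q_x}{\beta} = q_x\bigl(\tfrac{p_x}{q_x} - \tfrac{1}{\beta}\bigr) \leq -\tfrac{\alpha}{2}\bigl(\tfrac{1}{\beta} - \rho'\bigr) =: -\delta < 0.
\end{equation*}
Consequently $P_X V(x) \leq \lambda V(x)$ for $x \geq N$ with $\lambda := 1 - (\beta-1)\delta \in (0,1)$. For $x < N$ both $V(x)$ and $P_X V(x)$ are uniformly bounded, so absorbing these finitely many terms into an additive constant produces the required drift $P_X V(x) \leq \lambda V(x) + L$ on all of $\sX$.

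Next I would verify the unboundedness conditions. Clearly $V(x) = \beta^x \to \infty$, so its sublevel sets are finite. For $W$, direct computation from the conditional $\pi_{X|Y}(\cdot|y)$ gives
\begin{equation*}
G(y) = \frac{a_y \beta^y + b_y \beta^{y+1}}{a_y + b_y} = \beta^{y} \cdot \frac{a_y + b_y \beta}{a_y + b_y} \geq \beta^{y},
\end{equation*}
since $\beta > 1$. Hence $W(x,y) = \beta^x + c\,G(y) \geq \beta^x + c\beta^y$ tends to infinity whenever $x + y \to \infty$, so its sublevel sets on $\mathbb{N}\times\mathbb{N}$ are finite and therefore compact. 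Fixing any $c$ in the interval prescribed by Lemma~\ref{lem:drift} and invoking Proposition~\ref{prop:feller drift} then yields geometric ergodicity of $\Phi_X$, $\Phi_{DGS}$, and $\Phi_{RGS}$.

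The only delicate step is calibrating $\beta$: the inequality $p_x < q_x/\beta$ alone does not suffice, because the gap $q_x/\beta - p_x$ could shrink to zero along a subsequence. This is precisely where the second hypothesis $\liminf q_x > 0$ enters, ensuring a uniform positive gap that translates into $\lambda < 1$. Everything else is a routine verification on a discrete nearest-neighbour chain.
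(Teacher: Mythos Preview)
Your proof is correct and follows essentially the same route as the paper: the exponential drift function $V(x)=\beta^{x}$, the identical one-step computation $P_X V(x) = [1 + (\beta-1)(p_x - q_x/\beta)]V(x)$, the calibration of $\beta$ via $\limsup p_x/q_x$ together with the use of $\liminf q_x > 0$ to secure a uniform gap, absorption of finitely many small $x$ into the constant $L$, and the observation $G(y)\ge \beta^{y}$ to get $W$ unbounded off compact sets. The only cosmetic difference is that the paper restricts $\beta$ to $(1,2/(r+1))$ rather than $(1,1/\rho)$, which is immaterial.
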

\begin{proof}
  We need only verify the conditions of Proposition~\ref{prop:feller drift}
  and we've already seen that both $P_{X}$ and $P_{RGS}$ are Feller.
  Set $V(x) = z^{x}$ for some $z > 1$ which will be determined later
  and note that
\[
G(y) = \sum_{x=1}^{\infty} V(x) \pi_{X|Y}(x|y) = \left( \frac{a_{y} +
    z b_{y}}{a_{y} + b_{y}} \right) z^{y} \; .
\]
For any $d>0$, the sublevel set $A_d:=\{x: V(x) \leq d \}=\{x: z^x
\leq d\}$ is bounded. Since $V$ is a continuous function, $A_d$ is
also closed, hence compact. Therefore $V$ is unbounded off compact
sets on $\sX$. On the other hand, for any $d>0$, the sublevel set
$B_d:=\{y: G(y) \leq d \} \subset \{y: z^{y} \leq d\}$ is
bounded.  Then for any $b > 0$, $W(x,y) = V(x) + b G(y)$ is unbounded
off compact sets on $\sX\times \sY$ because for any $d>0$, $\{(x,y):
W(x,y)\leq d\} \subset A_d \times B_d $ is bounded and closed, hence
compact. Now, all that remains is to construct a drift
  condition for $V$.
 Note that for $x \ge 2$,
\begin{align}\label{eq:V}
  P_{X} V (x) & = \sum_{x'=1}^{\infty} z^{x'} k_{X}(x' | x) \nonumber\\
  & = p_{x} z^{x+1} + q_{x} z^{x-1} + (1 - p_{x} - q_{x}) z^{x} \nonumber\\
  & = \left[z p_{x} + \frac{q_{x}}{z} + 1 - p_{x} - q_{x}\right] z^{x} \nonumber\\
  & = \left[p_{x} (z - 1) + q_{x} \left(\frac{1}{z} - 1 \right)+1
    \right] V(x) \,.
\end{align}
 We next try to bound the coefficient of $V(x)$ in the right hand side
 of \eqref{eq:V} for all large values of $x$. Set 
\[
r : = \limsup_{x \to \infty} \frac{p_{x}}{q_{x}} \qquad \text{ and }
\qquad q := \liminf_{x \to \infty} q_{x} 
\]
and note that $r < 1$ and $q > 0$ by assumption. Then there exists
 $x_{0} \ge 2$ such that
\[
\frac{p_{x}}{q_{x}} < \frac{r+1}{2} \text{\;\;and\;\;} q_x>\frac{q}{2}
\qquad \text{ for all } x > x_{0} \; . 
\]
For any $z \in (1, 2/(r+1))$ and $x > x_{0}$\,,
\begin{align*}
p_{x} (z-1) + q_{x} \left(\frac{1}{z} - 1\right) + 1 & < \frac{r+1}{2} q_{x} (z-1) +
\frac{q_{x} (1-z)}{z} + 1 \\
& = q_{x} (z-1) \left( \frac{r+1}{2} - \frac{1}{z} \right) + 1 \\
& < \frac{q}{2} (z-1) \left( \frac{r+1}{2} - \frac{1}{z} \right) + 1 
\end{align*}
since $z \in (1, 2/(r+1))$ implies
\[
\frac{r+1}{2} - \frac{1}{z}  < 0 \; .
\]
Next note that
\[
 0< q<1\;, \quad 0 < z - 1 < \frac{1-r}{1+r} < 1 \;\;\text{and}\;\;
-\frac{1}{2} < \frac{r+1}{2} - \frac{1}{z} < 0,
\]
which guarantees
\[
0 <  \frac{q}{2} (z-1) \left( \frac{r+1}{2} - \frac{1}{z} \right) +1< 1\,.
\]
Thus there exists $0 < \rho < 1$ such that
\begin{equation}\label{eq:rho}
 \frac{q}{2} (z-1) \left( \frac{r+1}{2} - \frac{1}{z} \right)+ 1 \le \rho < 1 \; .
\end{equation}
Finally, to bound $P_{X} V(x)$ for $x \le x_0$, set 
\begin{equation}\label{eq:L}
L : = \max_{x \le x_{0}} P_{X} V(x)\,. 
\end{equation}
Putting together equations~\eqref{eq:V} to \eqref{eq:L}, we have
\[
P_{X} V(x) \le \rho V(x) + L 
\]
with $0 < \rho < 1$ and $L < \infty$.  The conclusion now follows from
Proposition~\ref{prop:feller drift}.

\end{proof}

The above sufficient condition for geometric ergodicity involves
transition probabilities of the chain $\Phi_X$. Alternatively, we
could state a sufficient condition in terms of the probabilities
$\{a_i, b_i\}$ which define the density $\pi$.

Define 
\[
A  :=
\limsup_{i \to \infty} \frac{a_{i}}{a_{i-1}} \; ; \quad
m  := \liminf_{i \to
  \infty} \frac{a_{i}}{b_{i}} \; ;  \text{ and} \; 
 \quad M  := \limsup_{i \to \infty}
\frac{a_{i}}{b_{i}} \; .
\]

\begin{corollary} \label{cor:sufficient for GE}
If 
\[
\limsup_{i \to \infty} \frac{a_{i}}{b_{i-1}} < \infty, \qquad \qquad
\limsup_{i \to \infty} \frac{b_{i}}{a_{i}} < \infty
\]
and $A(1+M) / (1+m) < 1$, then $\Phi_{X}$, $\Phi_{DGS}$, and
$\Phi_{RGS}$ are geometrically ergodic.
\end{corollary}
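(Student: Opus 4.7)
The strategy is to reduce the corollary to Lemma~\ref{lem:sufficient for GE} by verifying its two hypotheses, \textbf{(i)} $\limsup_{x\to\infty} p_x/q_x<1$ and \textbf{(ii)} $\liminf_{x\to\infty} q_x>0$, using only the ratios $\{a_i/a_{i-1}\}$, $\{a_i/b_i\}$, $\{b_i/a_i\}$, and $\{a_i/b_{i-1}\}$ whose asymptotic behaviour is controlled by the hypotheses. Before starting I would record two quick consequences of the hypotheses: since $A(1+M)/(1+m)<1$ is a valid numerical inequality, we must have $M<\infty$ (so $\limsup a_i/b_i$ is finite); and since $\limsup b_i/a_i<\infty$, we have $m=\liminf a_i/b_i=1/\limsup (b_i/a_i)>0$. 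These two facts are what make the limsup bounds in step (i) meaningful and give the lower bounds needed in step (ii).

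For step (i), the plan is to do a purely algebraic simplification of the ratio $p_x/q_x$ from the definitions before Lemma~\ref{lem:sufficient for GE}. Cancelling the common factor $a_x+b_{x-1}$ in the denominators leaves
\[
\frac{p_x}{q_x}
\;=\;\frac{a_x}{a_{x-1}}\,\cdot\,\frac{1+a_{x-1}/b_{x-1}}{1+a_x/b_x}.
\]
Then I would apply the standard inequalities $\limsup(f_ng_n)\le(\limsup f_n)(\limsup g_n)$ and $\limsup(f_n/g_n)\le(\limsup f_n)/(\liminf g_n)$ (valid for positive sequences with finite limsups and positive liminf, which is exactly our situation) to conclude
\[
\limsup_{x\to\infty}\frac{p_x}{q_x}\;\le\;A\cdot\frac{1+M}{1+m}\;<\;1,
\]
which is hypothesis (i) of Lemma~\ref{lem:sufficient for GE}.

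For step (ii), I would divide numerator and denominator of $q_x$ by $b_{x-1}^2$ to get
\[
q_x\;=\;\frac{a_{x-1}/b_{x-1}}{(1+a_x/b_{x-1})(1+a_{x-1}/b_{x-1})}.
\]
The hypothesis $\limsup a_i/b_{i-1}<\infty$ bounds $a_x/b_{x-1}$ above by some constant for all large $x$; the finiteness of $M$ bounds $a_{x-1}/b_{x-1}$ above; and $m>0$ bounds $a_{x-1}/b_{x-1}$ below away from zero for large $x$. Combining these three estimates gives a strictly positive eventual lower bound for $q_x$, i.e.\ $\liminf q_x>0$. With (i) and (ii) in hand, Lemma~\ref{lem:sufficient for GE} immediately yields geometric ergodicity of $\Phi_X$, $\Phi_{DGS}$, and $\Phi_{RGS}$.

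The only mildly delicate point is the handling of limsup/liminf for a product and a quotient in step (i); one must check that all sequences involved are nonnegative and that the relevant limsups are finite and liminfs strictly positive so that the subadditive/submultiplicative bounds are sharp enough to deliver strict inequality $<1$. This is where the two finiteness hypotheses $\limsup a_i/b_{i-1}<\infty$ and $\limsup b_i/a_i<\infty$ do the real work, because they ensure $m>0$ and indirectly $M<\infty$, without which the key bound $A(1+M)/(1+m)<1$ would not even be usable. Everything else in the proof is routine bookkeeping.
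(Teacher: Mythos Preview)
Your proposal is correct and follows essentially the same route as the paper: verify the two hypotheses of Lemma~\ref{lem:sufficient for GE} by the identical algebraic simplification of $p_x/q_x$ and a routine lower bound on $q_x$. The only cosmetic difference is that the paper factors $q_i=\bigl(1+a_i/b_{i-1}\bigr)^{-1}\bigl(1+b_{i-1}/a_{i-1}\bigr)^{-1}$ and bounds each factor directly from the two stated $\limsup$ hypotheses, whereas you first derive $m>0$ and $M<\infty$ and then bound your equivalent factorization; either way the argument is the same.
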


\begin{proof}
We verify the conditions of Lemma~\ref{lem:sufficient for GE}.  Note that
\[
q_{i} = \frac{b_{i-1}}{a_{i} + b_{i-1}}
\frac{a_{i-1}}{a_{i-1}+b_{i-1}} = \frac{1}{1 + \frac{a_{i}}{b_{i-1}}}
\frac{1}{1 + \frac{b_{i-1}}{a_{i-1}}} \; .
\]
Hence 
\[
\liminf_{i \to \infty} q_{i} \ge \frac{1}{1 + \limsup_{i \to \infty}
  \, \frac{a_{i}}{b_{i-1}}}
\frac{1}{1 + \limsup_{i \to \infty} \frac{b_{i-1}}{a_{i-1}}}  > 0\; .
\]
Next observe that
\[
\frac{p_{i}}{q_{i}} = \frac{a_{i}}{a_{i-1}} \frac{b_{i}}{b_{i-1}}
\frac{a_{i-1} + b_{i-1}}{a_{i} + b_{i}} = \frac{a_{i}}{a_{i-1}}
\frac{1 + \frac{a_{i-1}}{b_{i-1}}}{1 + \frac{a_{i}}{b_{i}}} \; .
\]
Hence
\[
\limsup_{i \to \infty} \frac{p_{i}}{q_{i}} \le \left[ \limsup_{i \to
    \infty} \frac{a_{i}}{a_{i-1}} \right] \, \left[ \frac{1 +
    \limsup_{i \to \infty} \frac{a_{i-1}}{b_{i-1}}}{1 + \liminf_{i \to
      \infty} \frac{a_{i}}{b_{i}}}  \right] = A \frac{1+M}{1+m} < 1\; .
\]
\end{proof}

So far in this section, we have used Proposition~\ref{prop:feller
  drift} to get sufficient conditions for the geometric ergodicity of
the Markov chains. Next, we use Proposition~\ref{prop:notGE} to study
the conditions under which the Markov chains are subgeometrically
ergodic.

\begin{lemma} \label{lem:sufficient for subGE} 
  The Markov chains $\Phi_{X}$, $\Phi_{DGS}$ and $\Phi_{RGS}$ are subgeometrically ergodic if any one of the following conditions hold:
\begin{enumerate}
\item 
\[
\limsup_{i \to \infty} \frac{\sum_{x=i}^{\infty} (a_{x} +
  b_{x})}{a_{i-1}} = \infty \; ;
\]
\item 
\[
\limsup_{i \to \infty} \frac{\sum_{x=i}^{\infty} (a_{x} +
  b_{x})}{b_{i-1}} = \infty \; ; \text{ or }
\]
\item 
\[
\limsup_{i \to \infty} \frac{b_{i}}{a_{i}} = \infty \; .
\]
\end{enumerate}
\end{lemma}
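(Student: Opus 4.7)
The plan is to apply Proposition~\ref{prop:notGE} by exhibiting, for each of the three hypotheses, a sequence of test functions in $L_{0,1}^{2}(\varpi_X)$ along which the quantity in \eqref{eq:liminf} vanishes. My main observation is that the \emph{same} one-parameter family of test functions works in all three cases, with only the final estimate differing. Specifically, write $P_i = \varpi_X(X \ge i) = \sum_{x=i}^{\infty}(a_x + b_{x-1})$ and, for $i \ge 2$, define
\[
h_i(x) = \frac{I(x \ge i) - P_i}{\sqrt{P_i(1-P_i)}}.
\]
Because every $\pi_X(j) = a_j + b_{j-1}$ is strictly positive, $0 < P_i < 1$, and by construction $E_{\varpi_X} h_i = 0$ and $E_{\varpi_X} h_i^2 = 1$, so $h_i \in L_{0,1}^{2}(\varpi_X)$.

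Next I compute $E[\text{Var}(h_i(X)\mid Y)]$. Given $Y = y$, $X$ equals $y$ with probability $a_y/(a_y+b_y)$ and $y+1$ with probability $b_y/(a_y+b_y)$, so $\text{Var}(h_i(X)\mid Y=y) = \tfrac{a_y b_y}{(a_y+b_y)^2}(h_i(y+1)-h_i(y))^2$. Because $h_i$ jumps only at $x=i$, this is nonzero only at $y=i-1$, where the jump has magnitude $1/\sqrt{P_i(1-P_i)}$. Using $\pi_Y(i-1) = a_{i-1}+b_{i-1}$, this gives the clean formula
\[
E[\text{Var}(h_i(X)\mid Y)] \;=\; \frac{a_{i-1}\,b_{i-1}}{(a_{i-1}+b_{i-1})\,P_i(1-P_i)}.
\]

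It remains to show that the right-hand side has liminf $0$ under any of the three hypotheses. Two elementary facts drive everything: $\frac{ab}{a+b} \le \min(a,b)$, and $P_i \ge \pi_X(i) \ge b_{i-1}$. Also $P_i \to 0$, so $1-P_i \to 1$ and it suffices to analyze $\frac{a_{i-1}b_{i-1}}{(a_{i-1}+b_{i-1}) P_i}$. For (1), note that $\sum_{x=i}^\infty(a_x+b_x) = P_i - b_{i-1} \le P_i$, so the hypothesis forces $a_{i-1}/P_i \to 0$ along a subsequence, and $\frac{ab}{a+b} \le a_{i-1}$ closes the bound. Case (2) is symmetric, yielding $b_{i-1}/P_i \to 0$ along a subsequence and using $\frac{ab}{a+b} \le b_{i-1}$. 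For (3), pick a subsequence along which $b_{i-1}/a_{i-1} \to \infty$; then $\frac{ab}{a+b} \le a_{i-1}$ combined with $P_i \ge b_{i-1}$ gives $E[\text{Var}(h_i(X)\mid Y)] \le a_{i-1}/((1-P_i) b_{i-1}) \to 0$. Proposition~\ref{prop:notGE} then delivers the subgeometric ergodicity of $\Phi_X$, $\Phi_{DGS}$, and $\Phi_{RGS}$.

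The main obstacle, I expect, is not any single calculation but recognizing up front that a unified family of test functions (the normalized upper-tail indicators) suffices for all three criteria; once that is in hand, the three conditions are precisely the hypotheses needed to bound $\min(a_{i-1},b_{i-1})/P_i$ in the three natural ways via either the numerator (cases 1 and 2) or the denominator (case 3).
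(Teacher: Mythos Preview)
Your proof is correct and follows essentially the same route as the paper: the paper also uses the normalized upper-tail indicators $h_i(x)=\bigl(I(x\ge i)-\mu_i\bigr)/\sqrt{\mu_i(1-\mu_i)}$ (your $P_i$ is the paper's $\mu_i$) and obtains the identical closed form $E[\text{Var}(h_i(X)\mid Y)] = \dfrac{a_{i-1}b_{i-1}}{(a_{i-1}+b_{i-1})\,\mu_i(1-\mu_i)}$. The only cosmetic difference is the endgame---the paper rewrites the \emph{reciprocal} of this quantity as $(1-\mu_i)\Bigl[\tfrac{\sum_{x\ge i}(a_x+b_x)}{a_{i-1}}+\tfrac{\sum_{x\ge i}(a_x+b_x)}{b_{i-1}}+\tfrac{b_{i-1}}{a_{i-1}}+1\Bigr]$ so that the three hypotheses visibly correspond to three of the summands, whereas you handle the three cases separately via $\tfrac{ab}{a+b}\le\min(a,b)$ and $P_i\ge b_{i-1}$.
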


\begin{proof}
  Let $(X, Y) \sim \varpi$.  For $i=1,2,3,\ldots$ let $H_{i}(x) = I(x
  \ge i)$.  Then
\[
\mu_{i} : = E[ H_{i}(X)] = E[ H_{i}^{2}(X)] = \sum_{x=i}^{\infty}
(a_{x} + b_{x-1}) < \infty 
\]
and
\[
v_{i} : = \text{Var}[H_{i}(X)] = \mu_{i} (1-\mu_{i}) < \infty \; .
\]
Define $h_{i}(x) = [ H_{i}(x) - \mu_{i}] / \sqrt{v_{i}}$ and note that
  $h_{i} \in L_{0,1}^{2}(\varpi_{X})$.  We will show that 
\[
\liminf_{i \to \infty} E[ Var(h_{i}(X) | Y)] = 0 \; ,
\]
and appeal to Proposition~\ref{prop:notGE} for the conclusion.  Let 
\[
\beta_{y} = \frac{b_{y}}{a_{y} + b_{y}} = \pi_{X|Y}(y+1|y) \; .
\]
Then
\begin{align*}
E[ H_{i}(X) | Y=y]& = E[ H_{i}^{2}(X) | Y=y] \\
& = \pi_{X|Y}(y|y) H_{i}(y) + \pi_{X|Y}(y+1|y) H_{i}(y+1) \\
& = \begin{cases}
0 & y \le i-2 ,\\
\beta_{i-1} & y = i-1 ,\\
1 & y \ge i \; .
\end{cases}
\end{align*}
Hence
\[
Var[ H_{i}(X) | Y=y] = 
\begin{cases}
\beta_{i-1}(1 - \beta_{i-1}) & y = i-1 ,\\
0 & \text{otherwise} \; .
\end{cases}
\]
Therefore,
\begin{align*}
E(Var [ H_{i}(X) | Y=y]) & = \sum_{y=1}^{\infty} \pi_{Y}(y) Var [
H_{i}(X) | Y=y] \\
 & = \pi_{Y}(i-1) Var [ H_{i}(X) | Y=i-1] \\
 & = (a_{i-1} + b_{i-1}) \beta_{i-1} (1-\beta_{i-1}) \\
 & = \frac{a_{i-1} b_{i-1}}{a_{i-1} + b_{i-1}} \; .
\end{align*}
Finally,
\[
E [ Var(h_{i}(X) | Y)]= v_{i}^{-1} E [ Var(H_{i}(X) | Y)] =
[\mu_{i}(1-\mu_{i})]^{-1} \frac{a_{i-1} b_{i-1}}{a_{i-1} + b_{i-1}}\; .
\]
Note that
\begin{align*}
(E [ Var(h_{i}(X) | Y)])^{-1} & = \mu_{i}(1-\mu_{i}) \frac{a_{i-1} +
  b_{i-1}}{a_{i-1} b_{i-1}}  \\
& = (1-\mu_{i}) \left[ \sum_{x=i}^{\infty} (a_{x} + b_{x}) + b_{i-1}
\right] \left( \frac{1}{a_{i-1}} + \frac{1}{b_{i-1}}\right)  \\
& = (1-\mu_{i}) \left[ \frac{\sum_{x=i}^{\infty} (a_{x} +
    b_{x})}{a_{i-1}} + \frac{\sum_{x=i}^{\infty} (a_{x} +
    b_{x})}{b_{i-1}} + \frac{b_{i-1}}{a_{i-1}} + 1 \right]
\end{align*}
and that
\[
\lim_{i \to \infty} ( 1- \mu_{i}) = \lim_{i \to \infty}
\sum_{x=1}^{i-1} (a_{x} + b_{x-1}) = 1 \; .
\]
Hence equation \eqref{eq:liminf} holds if and only if 
\begin{align*}
& 
\limsup_{i \to \infty} \frac{\sum_{x=i}^{\infty} (a_{x} +
  b_{x})}{a_{i-1}} = \infty , \\
\text{or} \quad & \limsup_{i \to \infty} \frac{\sum_{x=i}^{\infty} (a_{x} +
  b_{x})}{b_{i-1}} = \infty, \\
\text{or} \quad & \limsup_{i \to \infty} \frac{b_{i}}{a_{i}} = \infty \; .
\end{align*}
\end{proof}

Finally, we can use the previous results to characterize the
conditions for geometric ergodicity of TGS Markov chains for a large
subfamily of our discrete distributions.

\begin{corollary} \label{cor:subfamily}
Assume that both $A : = \lim_{i \to \infty} \frac{a_{i}}{a_{i-1}}$ and
$\lim_{i \to \infty} \frac{a_{i}}{b_{i}}$ exist.  Then all the limits
  below are well defined and the following statements are equivalent:
\begin{enumerate}
\item[(a)] 
\[
\lim_{i \to \infty} \frac{a_{i}}{b_{i-1}} < \infty, \quad \lim_{i \to
  \infty} \frac{b_{i}}{a_{i}} < \infty, \quad \text{ and } \quad A < 1
\, .
\]
\item[(b)]
\[
r = \lim_{i \to \infty} \frac{p_{i}}{q_{i}} < 1 \quad \text{ and }
\quad q = \lim_{i \to \infty} q_{i} > 0 \, .
\]
\item[(c)] $\Phi_{X}$ is geometrically ergodic.
\item[(d)] $\Phi_{DGS}$ is geometrically ergodic.
\item[(e)] $\Phi_{RGS}$ is geometrically ergodic. 
\end{enumerate}
\end{corollary}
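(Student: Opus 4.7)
The plan is to close the loop: establish (a) $\Leftrightarrow$ (b) by direct computation, deduce (a) $\Rightarrow$ (c),(d),(e) from Corollary~\ref{cor:sufficient for GE}, and then show that if (a) fails, one of the three conditions of Lemma~\ref{lem:sufficient for subGE} must hold, forcing $\Phi_X$, $\Phi_{DGS}$, and $\Phi_{RGS}$ to all be subgeometrically ergodic. Since both Corollary~\ref{cor:sufficient for GE} and Lemma~\ref{lem:sufficient for subGE} assert their conclusions simultaneously for all three chains, this two-sided dichotomy automatically yields the five-way equivalence.

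Under the standing hypothesis that $A$ and $\ell := \lim_i a_i/b_i$ exist in $[0,\infty]$, the identities
\[
q_i = \frac{1}{1 + a_i/b_{i-1}} \cdot \frac{1}{1 + b_{i-1}/a_{i-1}}, \qquad \frac{p_i}{q_i} = \frac{a_i}{a_{i-1}} \cdot \frac{1 + a_{i-1}/b_{i-1}}{1 + a_i/b_i}
\]
express $q_i$ and $p_i/q_i$ as rational combinations of the shifted sequences $a_i/a_{i-1}$ and $a_i/b_i$, whose limits exist by hypothesis, so all the limits appearing in (a) and (b) are well-defined. The equivalence (a) $\Leftrightarrow$ (b) then reduces to the observation that $q > 0$ is exactly the conjunction $\lim a_i/b_{i-1} < \infty$ and $\lim b_i/a_i < \infty$, under which the factor $(1 + a_{i-1}/b_{i-1})/(1 + a_i/b_i)$ tends to $(1+\ell)/(1+\ell) = 1$ and so $\lim p_i/q_i = A$. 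For (a) $\Rightarrow$ (c),(d),(e), I will note that (a) is precisely the hypothesis of Corollary~\ref{cor:sufficient for GE} in the special case $m = M = \ell$ (for which $A(1+M)/(1+m) = A < 1$), so Proposition~\ref{prop:feller drift} delivers geometric ergodicity of all three chains at once.

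The substantive direction is the contrapositive. I will match each way of violating (a) to a condition of Lemma~\ref{lem:sufficient for subGE}. First, $A > 1$ is impossible, since it would make $a_i$ grow geometrically, contradicting $\sum_i a_i < 1$; so failure of the clause $A < 1$ reduces to $A = 1$. The case $\lim b_i/a_i = \infty$ is exactly condition (3) of Lemma~\ref{lem:sufficient for subGE}, and $\lim a_i/b_{i-1} = \infty$ yields condition (2) immediately via $(a_i + b_i)/b_{i-1} \geq a_i/b_{i-1} \to \infty$.

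The main obstacle is the case $A = 1$, where I must verify condition (1) using only $a_i/a_{i-1} \to 1$ and $\sum_i a_i < \infty$. The idea: for every fixed $N \geq 0$ and $0 \leq k \leq N$, the telescoping identity $a_{i+k}/a_{i-1} = \prod_{j=0}^{k} (a_{i+j}/a_{i+j-1})$ is a product of at most $N+1$ factors each tending to $1$, so $a_{i+k}/a_{i-1} \to 1$ as $i \to \infty$. Therefore
\[
\liminf_{i \to \infty} \sum_{x=i}^{\infty} \frac{a_x + b_x}{a_{i-1}} \geq \liminf_{i \to \infty} \sum_{k=0}^{N} \frac{a_{i+k}}{a_{i-1}} = N+1,
\]
and letting $N \to \infty$ gives $\limsup = \infty$, verifying condition (1) of Lemma~\ref{lem:sufficient for subGE}. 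This is the sole non-routine estimate; the rest is bookkeeping on the identities above.
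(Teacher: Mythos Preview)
Your proof is correct and follows essentially the same route as the paper: both arguments pair Lemma~\ref{lem:sufficient for GE}/Corollary~\ref{cor:sufficient for GE} for sufficiency with Lemma~\ref{lem:sufficient for subGE} for the contrapositive, and the key $A=1$ step is handled identically (the paper uses $a_{i+k}/a_{i}>1/2$ to get a lower bound of $K/2$, you use $a_{i+k}/a_{i-1}\to 1$ to get $N+1$, which is the same telescoping idea). The only organizational difference is that the paper runs the implications as a cycle $(a)\Rightarrow(b)\Rightarrow(c)/(e)\Rightarrow(a)$ while you set up a direct dichotomy on whether (a) holds; this is cosmetic.
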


\begin{proof}
As we noted in Section~\ref{sec:three chains}, the equivalence of (c) and (d) is well known.

\noindent\underline{(a) $\Rightarrow$ (b)}: Note that 
\[
q = \lim_{i \to \infty}   q_{i} = \frac{1}{1 + \lim_{i \to \infty} \frac{a_{i}}{b_{i-1}}}
\frac{1}{1 + \lim_{i \to \infty} \frac{b_{i-1}}{a_{i-1}}} > 0 
\]
and
\[
r = \lim_{i \to \infty} \frac{p_{i}}{q_{i}} = \left[ \lim_{i \to
    \infty}\frac{a_{i}}{a_{i-1}} \right] \, \left[ \frac{1 + \lim_{i
      \to \infty} \frac{a_{i-1}}{b_{i-1}}}{1 + \lim_{i \to \infty}
    \frac{a_{i}}{b_{i}}} \right] = A < 1\; .
\]

\noindent\underline{(b) $\Rightarrow$ (c) and (b) $\Rightarrow$ (e)}:
The same argument holds for $\Phi_{X}$ and $\Phi_{RGS}$. Immediate by
Lemma~\ref{lem:sufficient for GE}.\\ 

\noindent\underline{(c) $\Rightarrow$ (a) and (e) $\Rightarrow$ (a)}:
The same argument holds for $\Phi_{X}$ and $\Phi_{RGS}$. If the chain
is geometrically ergodic, then $\lim_{i \to \infty}
\frac{a_{i}}{b_{i-1}} < \infty$ and $\lim_{i \to \infty}
\frac{b_{i}}{a_{i}} < \infty$ by conditions 2 and 3 of
Lemma~\ref{lem:sufficient for subGE}.  Next, if $A=1$, then for any
fixed positive integer $K$, we have
\[
\lim_{i \to \infty} \frac{a_{i+1}}{a_{i}}= 1, \; \lim_{i \to \infty}
\frac{a_{i+2}}{a_{i}}= 1, \ldots, \lim_{i \to \infty}
\frac{a_{i+K}}{a_{i}} =1 \; .
\]
Then there exists $i_{0}$ such that for any $i \ge i_{0}$,
\[
\frac{a_{i+1}}{a_{i}} > \frac{1}{2}, \;  \frac{a_{i+2}}{a_{i}} >
\frac{1}{2}, \ldots, \frac{a_{i+K}}{a_{i}} > \frac{1}{2}\; .
\]
Hence, given any $K$, there exists $i_{0}$ such that for any $i >
i_{0}$,
\[
\frac{\sum_{x=i}^{\infty} (a_{x} + b_{x})}{a_{i-1}}\ge
\frac{\sum_{x=i}^{i + K - 1} a_{x}}{a_{i-1}} >\frac{K}{2} 
\]
which implies 
\[
\limsup_{i \to \infty} \frac{\sum_{x=i}^{\infty} (a_{x} +
  b_{x})}{a_{i-1}} = \infty \; .
\]
Thus by condition 1 of Lemma~\ref{lem:sufficient for subGE}, the
chains are subgeometrically ergodic--a contradiction of (c).  So $A
\neq 1$.  But $A$ cannot be greater than 1 either since otherwise
$\sum_{x=1}^{\infty} a_{x} = \infty$ which contradicts the fact that
$\sum_{x=1}^{\infty} a_{x} + \sum_{x=1}^{\infty} b_{x} = 1$.
Therefore, $A < 1$.
\end{proof}

To better understand the conditions for geometric ergodicity provided
in Corollary~\ref{cor:subfamily}, we hereby explain its condition~(a)
explicitly. First, the requirement that $A=\lim_{i \to \infty}
\frac{a_{i}}{a_{i-1}}<1$ implies that for any $0<A_1<A<A_2<1$, there
exists $i_0$ such that for any $i>i_0$, $a_i/a_{i-1}\in(A_1,A_2)$,
hence $a_i\in (a_{i_0}A_1^{i-i_0},a_{i_0}A_2^{i-i_0})$. In other
words, the sequence $\{a_i\}$ decays at a geometric rate as $i$
increases. Secondly, the requirements $\lim_{i \to \infty}
\frac{a_{i}}{b_{i-1}} < \infty$ and $\lim_{i \to \infty}
\frac{b_{i}}{a_{i}} < \infty$ imply that there exist $0<B_1,
B_2<\infty$ such that, for any $i>i_0$, $a_{i+1}/b_{i}<B_1$ and
$b_{i}/a_{i}<B_2$, hence $b_i \in (a_{i+1}B_1 ,a_i B_2) \subset (a_i
A_1B_1, a_i B_2)$. That is, $b_i=O(a_i)$ as $i \to \infty$. In
summary, Condition~(a) requires that the sequences $\{a_i\}$ and
$\{b_i\}$ both decay geometrically at the same rate as $i$ increases.

  We close this section by considering four concrete examples.

\begin{example}
Let $a_{x} = c_{1} x^{-d}$ and $b_{x} = c_{2} x^{-d}$ where $d > 1$
and $(c_{1} + c_{2}) \sum_{x=1}^{\infty} x^{-d} = 1$.  Then both
$\lim_{i \to \infty} \frac{a_{i}}{a_{i-1}}$ and $\lim_{i \to \infty}
\frac{a_{i}}{b_{i}}$ exist, with $A= 1$.  Therefore, $\Phi_{X}$,
$\Phi_{DGS}$ and $\Phi_{RGS}$ are subgeometrically ergodic by
Corollary~\ref{cor:subfamily}. 
\end{example}

\begin{example}
  Let $c$ satisfy $(1+c)e^{-1} / (1 - e^{-1}) = 1$.  Set $a_{x} = c
  e^{-x}$ and $b_{x} = e^{-x}$.  Then both $\lim_{i \to \infty}
  \frac{a_{i}}{a_{i-1}}$ and $\lim_{i \to \infty} \frac{a_{i}}{b_{i}}$
  exist, with $A= e^{-1} < 1$.  Furthermore, $\limsup_{i \to \infty}
  \frac{a_{i}}{b_{i-1}} = \lim_{i \to \infty} c e^{-1} < \infty$ and
  $\limsup_{i \to \infty} \frac{b_{i}}{a_{i}} = c^{-1} < \infty$.
  Therefore, $\Phi_{X}$, $\Phi_{DGS}$ and $\Phi_{RGS}$ are all
  geometrically ergodic by Corollary~\ref{cor:subfamily}.
\end{example}

\begin{example}
Let $c$ satisfy $c e^{-1} / (1 - e^{-1}) + e^{-2} / (1 - e^{-2})=1$.
Set $a_{x} = ce^{-x}$ and $b_{x} = e^{-2x}$.  Then both $\lim_{i \to
  \infty} \frac{a_{i}}{a_{i-1}}$ and $\lim_{i \to \infty}
\frac{a_{i}}{b_{i}}$ exist.  Also, 
\[
\limsup_{i \to \infty} \frac{a_{i}}{b_{i-1}} = \lim_{i \to \infty} c
e^{i-2} = \infty \; .
\] 
Therefore, $\Phi_{X}$, $\Phi_{DGS}$ and $\Phi_{RGS}$ are subgeometrically ergodic by Corollary~\ref{cor:subfamily}.
\end{example}

\begin{example}
Let $c$ satisfy $c e^{-1} / (1 - e^{-1}) + e^{-2} (1 - e^{-2}) = 1$.
Set
\[
a_{x} = \begin{cases}
ce^{-x} & x \text{ even } \\
e^{-2x} & x \text{ odd }
\end{cases}
\qquad \text{and} \qquad
b_{x} = \begin{cases}
e^{-2x} & x \text{ even }\\
ce^{-x} & x \text{ odd } 
\end{cases} \; .
\]
Then $\lim_{i \to \infty} \frac{a_{i}}{b_{i}}$ does not exist. Hence
Corollary~\ref{cor:subfamily} is not applicable.  Instead we have to
use Lemma~\ref{lem:sufficient for subGE}.  Notice that
\[
\limsup_{i \to \infty} \frac{b_{i}}{a_{i}} \ge \lim_{i \to \infty}
\frac{b_{2i+1}}{a_{2i+1}} = \lim_{i \to \infty}
\frac{ce^{-(2i+1)}}{e^{-2(2i+1)}} = \lim_{i \to \infty} c e^{2i+1} =
\infty
\]
 and hence $\Phi_{X}$, $\Phi_{DGS}$ and $\Phi_{RGS}$ are subgeometrically
ergodic. 
\end{example}

\bibliographystyle{imsart-nameyear}
\bibliography{mcref}
\end{document}